\newcommand{\sk}{\smallskip}
\newcommand{\mk}{\medskip}
\newcommand{\bk}{\bigskip}
\renewcommand{\emptyset}{\ensuremath{\varnothing}}     
\newlength{\leftlength}
\newlength{\rightlength}
\newlength{\calculskip}
\newcommand{\calculvskip}[1]{%
  \ifthenelse{#1 = 0}{\setlength{\calculskip}{0pt}}{}%
  \ifthenelse{#1 = 1}{\setlength{\calculskip}{\smallskipamount}}{}%
  \ifthenelse{#1 = 2}{\setlength{\calculskip}{\medskipamount}}{}%
  \ifthenelse{#1 = 3}{\setlength{\calculskip}{\bigskipamount}}{}%
  \ifthenelse{#1 = 4}{\setlength{\calculskip}{1cm}}{}%
  \vskip\calculskip
}
\newcommand{\leftcentersright}[4][2]{%
        \settowidth{\leftlength}{#2}%
        \settowidth{\rightlength}{#4}%
        \calculvskip{#1}
        \noindent#2\hskip-\leftlength%
        \hfill#3\hfill
        \mbox{}\hskip-\rightlength#4%
        \vskip\calculskip%
        }
\newcommand{\centers}[2][2]{\leftcentersright[#1]{}{#2}{}}
\newcommand{\leftcenters}[3][2]{\leftcentersright[#1]{#2}{#3}{}}
\renewcommand\section{\@startsection{section}{1}{\z@}%
	{1cm  \@plus -1ex \@minus -.2ex}%
	{2.3ex \@plus.2ex}%
	{\reset@font\normalsize\scshape\centering}}
\newtheorem{thm}{Theorem}[section]  
\newtheorem{prop}[thm]{Proposition}
\newtheorem{cor}[thm]{Corollary}
\newtheorem{qu}[thm]{Question}
\newtheorem{for}[thm]{Formula}
\newenvironment{rmk}{
	\refstepcounter{thm}%
	\noindent \textbf{Remark \thethm.}}{}
\newenvironment{example}{
	\refstepcounter{thm}%
	\noindent \textbf{Example \thethm.}
}{}
\newenvironment{notations}{
	\refstepcounter{thm}%
	\noindent \textbf{Notations \thethm.}
}{}
\newcommand{\bb}{Bialynicki-Birula}
\begin{document}

\title{Note on the Deodhar decomposition \\ of a double Schubert cell}
\author{Olivier Dudas\footnote{Laboratoire de Math\'ematiques, Universit\'e de Franche Comt\'e}\ \,\footnote{The author is partly supported by the ANR, Project No JC07-192339.}}

\maketitle

\begin{abstract} We show that for an algebraic reductive group $G$, the partition of a double Schubert cell in the flag variety $G/B$ defined by Deodhar, and coming from a \bb\ decomposition, is not a stratification in general. We give a counterexample for a group of type B$_n$, where the closure of  some specific cell of dimension $2n$ has a non-trivial intersection with a cell of dimension $3n-3$. \end{abstract}

\section*{Introduction}

Let $G$ be an algebraic reductive group defined over an algebraically  closed field $k$ together with a fixed Borel subgroup $B$ containing a maximal torus $T$ of $G$. The Coxeter system corresponding to these data will be denoted by $(W,S)$. More precisely, $W = N_G(T)/T$ and $S$ is the set of non-trivial elements $s \in W$ such that $BsB$ is of minimal dimension.  The opposite Borel subgroup $B^*$ will be defined as the conjugate of $B$ by the longest element $w_0$ of $W$. 

\mk

We will be concerned with a refinement of the Bruhat stratification of the flag variety $G/B$. Recall that under the action of $B$ (resp. $B^*$), this variety decomposes into a disjoint union of orbits, each of them containing a unique element of $W$. Such an orbit will be denoted by $Bw \cdot B$ (resp. $B^*w \cdot B$) and referred as the Schubert cell (resp. the opposite Schubert cell) corresponding to $w$.

\mk

Given two elements of the Weyl group $w$ and $v$, Deodhar has defined in \cite{Deo}  a partition of the 
double Schubert cell $Bw\cdot B \cap B^*v \cdot B$ into affine smooth locally closed subvarieties of the flag variety $G/B$. This decomposition is not unique in general and depends on a reduced expression of $w$. When such an expression is chosen, the decomposition has a combinatorial definition: the set of cells is parametrized by some subexpressions of $w$, the distinguished ones, and each cell is isomorphic to $k^n \times (k^\times)^m$ where $n$ and $m$ can be defined in terms of the associated subexpression (see \cite[theorem 1.1]{Deo}).

\sk

In the special case where $w$ is a Coxeter element, Deodhar was able to describe the closure of a cell (see \cite[section 4]{Deo}), giving thus a complete description of the geometry of the double Schubert cell. This particular example, together with the recent work of Webster and Yakimov on a more general decomposition (see \cite{WY} and \cite{We}), lead to the following expectations: 

\begin{itemize} \renewcommand{\labelitemi}{(i)}

\item the closure of a cell is a union of cells; \vskip-19pt $\mathrm{ }$

 \renewcommand{\labelitemi}{(ii)}

\item there is a natural order on the set of cells related to the Bruhat order, such that the closure of a cell has a non trivial intersection with all the smaller cells for this order.

\end{itemize}

\noindent Unfortunately, these two assertions fail in general, and we give two examples showing that the situation is much more complicated (section \ref{proof}.2 and \ref{proof}.3). At the present time, we have no clue for what can be the closure of a cell.

\section*{Acknowledgments}

This work was carried out while I was a Program Associate at the Mathematical Sciences Research Institute in Berkeley. I wish to thank the Institution for their support and hospitality and especially Arun Ram, who really helped me and the other graduate students to make the most of our stay.  I also thank C\'edric Bonnaf\'e, Geordie Williamson and St\'ephane Gaussent for fruitful discussions about geometry of Schubert varieties.

\section{Double Schubert cells and Deodhar decomposition}

We recall in this section the principal result of \cite{Deo}, using a different approach due to Morel (see \cite[Section 3]{Mo}) which relies on a general decomposition theorem, namely the \bb\ decomposition, applied to Bott-Samelson varieties. 

\bk

Let $w \in W$ be an element of the Weyl group of $G$.  The Schubert variety $X_w$ associated to $w$ is the closure in $G/B$ of the Schubert cell $Bw\cdot B$. This variety is not smooth in general, but Demazure has constructed in \cite{Dem} a resolution of the singularities, called the Bott-Samelson resolution, which is a projective smooth variety over $X_w$. The construction is as follows: we fix a reduced expression $w=s_1 \cdots s_\ell$ of $w$ and we define the Bott-Samelson variety to be

\centers{$BS = P_{s_1} \times_B \cdots \times_B P_{s_\ell} /B$}

\noindent where $P_{s_i}= B \cup Bs_iB$ is the standard parabolic subgroup corresponding to the simple reflection $s_i$. It is thus defined as the quotient of $P_{s_1} \times \cdots \times P_{s_\ell}$  by the right action  of $B^\ell$ given by $(p_1, \ldots,p_\ell) \cdot (b_1, \ldots, b_\ell) = (p_1 b_1 , b_1^{-1}p_2 b_2 , \ldots,p_{\ell -1}^{-1}p_\ell b_\ell)$.
The homomorphism $\pi : BS \longrightarrow X_w$ which sends the class  $[p_1,\ldots,p_\ell]$ in $BS$ of an element  $(p_1,\ldots,p_\ell) \in  P_{s_1} \times \cdots \times P_{s_\ell} $ to the class of the product $p_1 \cdots p_\ell$ in $G/B$ is called the \textbf{Bott-Samelson resolution}. It is a proper surjective morphism of varieties and it induces an isomorphism between $\pi^{-1}(Bw\cdot B)$ and $Bw\cdot B$.

\sk

Now the torus $T$ acts naturally on $BS$ by left multiplication on the first component, or equivalently by conjugation on each component,  so that $\pi$ becomes a $T$-equivariant morphism. There are finitely many fixed points for this action, represented by the classes of the elements of $\Gamma = \{1,s_1\} \times \cdots \times \{1,s_\ell\}$ in $BS$; such an element will be called a \textbf{subexpression} of~$w$. 

\sk

For a subexpression $\gamma = (\gamma_1,\ldots, \gamma_\ell) \in \Gamma$ of $w$, we denote by $\gamma^i = \gamma_1 \cdots \gamma_i$ the $i$-th partial subword and we define the following two sets:

\leftcenters{and}{ $\begin{array}[b]{r@{\ \, = \ \, }l} 
I(\gamma) & \big\{i \in \{1, \ldots, \ell \} \ | \ \gamma_i = s_i \big\} \\[4pt]
J(\gamma) & \big\{i \in \{1,\ldots, \ell \} \ | \ \gamma^i s_i < \gamma^i \big \}. \end{array}$}

\noindent With these notations,  Deodhar's decomposition theorem (see \cite[Theorem 1.1 and Corollary 1.2]{Deo}) can be stated as follows:

\begin{thm} [Deodhar, 84]\label{ddec}There exists a family  $(D_\gamma)_{\gamma \in \Gamma}$ of disjoint smooth locally closed subvarieties of $Bw\cdot B$ such that: \begin{itemize}

\item[$\mathrm{(i)}$]  $D_\gamma$ is non empty if and only if $J(\gamma) \subset I(\gamma)$;

\item[$\mathrm{(ii)}$]  if  $D_\gamma$ is non empty, then it is isomorphic to $k^{|I(\gamma)|- |J(\gamma)|} \times (k^\times)^{\ell-|I(\gamma)|}$ as a variety;

\item[$\mathrm{(iii)}$] for all $v \in W$, the double Schubert cell has the following decomposition:

\centers{$ \displaystyle Bw \cdot B \cap B^* v \cdot B = \coprod_{\gamma \in \Gamma_v} D_\gamma$}

\noindent where $\Gamma_v$ is the subset of $\Gamma$ consisting of all subexpressions $\gamma$ such that $\gamma^\ell = v$.
\end{itemize} 

\end{thm}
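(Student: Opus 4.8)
The plan is to realize the cells $D_\gamma$ as the traces on $Bw\cdot B$ of the cells of a \bb\ decomposition of the Bott--Samelson variety $BS$. Recall that a smooth projective variety acted on by $k^\times$ with finitely many fixed points admits a paving by affine spaces: each fixed point $p$ yields an \emph{attracting cell} $C_p=\{x\mid \lim_{t\to 0}t\cdot x=p\}$, a locally closed affine space whose dimension is the number of strictly positive weights of the tangent space $T_pX$. I would pick a one-parameter subgroup $\lambda\colon k^\times\to T$ that is regular and antidominant and let it act on $BS$ through the left $T$-action. Its fixed points are exactly the subexpressions $\gamma\in\Gamma$, producing a family of attracting cells $(C_\gamma)_{\gamma\in\Gamma}$ that paves $BS$.

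The first key step is the weight computation. Viewing $BS$ as an iterated $\P^1$-bundle, the tangent space splits as $T_\gamma BS=\bigoplus_{i=1}^\ell L_i$, where $L_i$ is the tangent line of the $i$-th factor. Using that the $i$-th copy of $P_{s_i}/B\cong\P^1$ is translated by $\gamma^{i-1}$, and that the tangent weights of $P_{s_i}/B$ at its fixed points $eB$ and $s_iB$ equal $-\alpha_i$ and $\alpha_i$ (with $\alpha_i$ the simple root of $s_i$), one finds that $L_i$ has weight $\gamma^{i-1}(\alpha_i)$ when $\gamma_i=s_i$ and $-\gamma^{i-1}(\alpha_i)$ when $\gamma_i=1$. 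Since $\lambda$ is antidominant, $L_i$ is an attracting direction precisely when its weight is a negative root; a short case check against the two clauses defining $J(\gamma)$ — separating $i\in I(\gamma)$ from $i\notin I(\gamma)$ and using $w s_\beta<w\iff w(\beta)<0$ — shows that this occurs if and only if $i\notin J(\gamma)$. Hence $C_\gamma\cong k^{\ell-|J(\gamma)|}$.

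Next I would exploit the $T$-equivariance of $\pi$. For the same $\lambda$, the attracting cells of $G/B$ are exactly the opposite Schubert cells $B^*v\cdot B$, with fixed point $v\cdot B$. As $\pi$ commutes with the action and preserves limits, it sends $C_\gamma$ into the attracting cell of $\pi(\gamma)=\gamma^\ell\cdot B$, so $\pi(C_\gamma)\subset B^*\gamma^\ell\cdot B$ and $\pi^{-1}(B^*v\cdot B)=\coprod_{\gamma\in\Gamma_v}C_\gamma$. Since $\pi$ restricts to an isomorphism $\pi^{-1}(Bw\cdot B)\xrightarrow{\ \sim\ }Bw\cdot B$, I set $D_\gamma:=\pi\big(C_\gamma\cap\pi^{-1}(Bw\cdot B)\big)$; these are disjoint smooth locally closed subvarieties, and intersecting the previous equality with $Bw\cdot B$ gives at once the decomposition of assertion (iii).

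It remains to analyze $C_\gamma\cap\pi^{-1}(Bw\cdot B)$ to obtain (i) and (ii), which I expect to be the delicate point. The open set $\pi^{-1}(Bw\cdot B)$ is the locus $\{[p_1,\ldots,p_\ell]\mid p_i\in Bs_iB\text{ for all }i\}$, so membership is the position-by-position condition $p_i\notin B$. In the \bb\ chart around $\gamma$ the frozen positions are those $i\in J(\gamma)$, where $p_i$ is fixed at the type $\gamma_i$; such a position meets $p_i\notin B$ exactly when $\gamma_i=s_i$, i.e. when $i\in I(\gamma)$. Thus $D_\gamma\neq\emptyset$ forces $J(\gamma)\subset I(\gamma)$, and this condition conversely makes the cell nonempty, giving (i). For the free coordinates $i\notin J(\gamma)$, the attracting direction lies in the big-cell chart when $\gamma_i=s_i$ (so $i\in I(\gamma)\setminus J(\gamma)$), where the open condition is automatic and the coordinate runs over $k$; whereas when $\gamma_i=1$ (so $i\notin I(\gamma)\cup J(\gamma)$) the attracting coordinate passes through the point $B$ at its origin, so the open condition deletes it and the coordinate runs over $k^\times$. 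Counting the two types yields $D_\gamma\cong k^{|I(\gamma)|-|J(\gamma)|}\times(k^\times)^{\ell-|I(\gamma)|}$, which is (ii). The main obstacle throughout is bookkeeping the signs in the weight computation so that the attracting directions match $J(\gamma)$, and verifying that the \bb\ paving is compatible with the iterated-bundle charts so that the open condition genuinely decouples position by position.
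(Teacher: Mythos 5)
Your proposal is correct and follows essentially the same route as the paper: a Bialynicki-Birula decomposition of the Bott--Samelson variety with respect to a generic cocharacter (your antidominant-attracting convention matches the paper's dominant one up to the usual sign), transported to $Bw\cdot B$ through the $T$-equivariant isomorphism $\pi^{-1}(Bw\cdot B)\simeq Bw\cdot B$, followed by the same position-by-position analysis of the open condition $p_i\in Bs_iB$ to get (i) and (ii). The one step you flag as remaining --- that the cell $C_\gamma$ really is the coordinate subspace $\{x_i=0 \ \text{for}\ i\in J(\gamma)\}$ in the $T$-stable chart $a_\gamma$, so that the open condition decouples position by position --- is precisely the explicit computation of H\"arterich that the paper cites instead of proving, so your attempt sits at the same level of completeness as the paper's own sketch.
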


\mk

\begin{rmk} In the first assertion, the condition for a cell $D_\gamma$ to be non-empty, that is $J(\gamma) \subset I(\gamma)$,  can be replaced by:

\centers{$\forall \, i=2, \ldots, \ell \qquad  \gamma^{i-1} s_i < \gamma^{i-1} \ \Longrightarrow \ \gamma_i = s_i$.}

\noindent A subexpression $\gamma \in \Gamma$ which satisfies this condition is called a \textbf{distinguished subexpression}. For example, if $G = \mathrm{SL}_3(k)$ and $w=w_0 = s t s$, then there are seven distinguished subexpressions, the only one being not distinguished is $(s,1,1)$.
\end{rmk}

\begin{proof}[Sketch of proof:] the Bott-Samelson variety is a smooth projective variety endowed with an action of the torus $T$. Let us consider the restriction of this action to $\mathbb{G}_m$ through a strictly dominant cocharacter $\chi : \mathbb{G}_m \longrightarrow T$. Since this action has a finite number of fixed points, namely the elements of $\Gamma$, there exists a Bialynicki-Birula decomposition of the variety $BS$ into a disjoint union of affine spaces indexed by $\Gamma$ (see \cite[Theorem 4.3]{BB}) 

\centers{$ \displaystyle BS = \coprod_{\gamma \in \Gamma} C^\gamma$.}

\noindent In \cite{Ha}, H\"arterich has explicitly computed the cells $C^\gamma$. To describe this computation, we need some more notations: $\Phi$ will be the root system corresponding to the pair $(G,T)$ and $\Phi^+$ (resp. $\Phi^-$) the set of positive (resp. negative) roots defined by $B$ (resp. $B^*$).  For any root $\alpha \in \Phi$ we denote by $U_\alpha$ the corresponding one-parameter subgroup and we choose an isomorphism $u_\alpha : k \longrightarrow U_\alpha$.  The simple roots associated to the simple reflections of the reduced expression $w = s_1 \cdots s_\ell$ will be denoted by $\alpha_1,\ldots,\alpha_\ell$. Finally, we consider the open immersion $a_\gamma : \mathbb{A}_\ell \longrightarrow BS$ defined by 

\centers{ $ a_\gamma(x_1,\ldots,x_\ell) = [u_{\gamma_1(-\alpha_1)}(x_1) \gamma_1, \ldots, u_{\gamma_\ell(-\alpha_\ell)}(x_\ell) \gamma_\ell ]. $}

\noindent Then one can easily check that $\pi^{-1}(Bw \cdot B) = \mathrm{Im}(a_{(s_1,\cdots,s_r)})$. Moreover, H\"arterich's computations (see \cite[Section 1]{Ha}) show that for any subexpression $\gamma \in \Gamma$, one has: 

\centers{$ C^\gamma \, = \, a_\gamma\big(\{ (x_1, \ldots,x_\ell) \in \mathbb{A}_\ell \ | \ x_i = 0 \ \text{ if }\ i \in J(\gamma) \} \big).$}

\noindent Taking the trace of this decomposition with $\pi^{-1}(Bw\cdot B)$, one obtains a decomposition of the variety $\pi^{-1}(Bw\cdot B)$. Furthermore, the restriction of $\pi$ to this variety induces an isomorphism with $Bw\cdot B$, and thus gives a partition of $Bw\cdot B$ into disjoint cells:

\centers{$ \pi^{-1}(Bw\cdot B) \, = \, \displaystyle \coprod_{\gamma \in \Gamma}  \pi^{-1}(Bw\cdot B) \cap C^\gamma \, \simeq \, \coprod_{\gamma \in \Gamma}  Bw\cdot B \cap \pi(C^\gamma) \, = \, Bw\cdot B.$}

\noindent If we define $D_\gamma$ to be the intersection $Bw\cdot B \cap \pi(C^\gamma)$, then it is explicitly given by:

\centers{$ D_\gamma \simeq \pi^{-1}(D_\gamma) \, = 
\, a_\gamma\big(\{ (x_1, \ldots,x_\ell) \in \mathbb{A}_\ell \ | \ x_i = 0 \ \text{ if }\ i \in J(\gamma) \ \text{ and } \ x_i \neq 0 \ \text{ if } \ i \notin I(\gamma) \} \big).$}

\noindent This description, together with the inclusion $\pi(C^\gamma) \subset B^* \gamma^\ell \cdot B$, proves the three assertions of the theorem.

\end{proof}

\begin{example}\ \label{ex}In the case where $G = \mathrm{SL}_3(k)$, and $w = w_0 = sts$, one can easily describe the double Schubert cell $Bw\cdot B \cap B^*\cdot B$. It is isomorphic to $BwB \cap U^*$ by the map $u \mapsto uB$, where $U^*$ denotes the unipotent radical of $B^*$. Besides, by Gauss reduction, the set $Bw Bw^{-1} = BB^*$ consists of all matrices whose principal minors are non-zero. Hence, 

\centers{$ BwB \cap U^* \, = \, \left\{ \left( \begin{array}{ccc} 1 & 0 & 0 \\ a& 1 & 0 \\ c & b & 1 \end{array} \right) \Big| \ c \neq 0 \ \text{ and } \ ab-c \neq 0 \right\}.$}

\noindent Considering the alternative $a = 0$ or $a \neq 0$, one has $BwB \cap U^* \simeq (k^\times)^3 \cup k \times k^\times$, which is exactly the decomposition given by the two distinguished expressions $(1,1,1)$ and $(s,1,s)$.\end{example}

\mk

\begin{notations}\ \label{not}For a subexpression $\gamma \in \Gamma$, we define the sequence

\centers{$\Phi(\gamma) \, = \, \big(\gamma^i(-\alpha_i) \ \big| \  i=1,\ldots, \ell \ \text{and} \ \gamma^i(\alpha_i) > 0\big).$}

\noindent Using H\"arterich's computation for the cell $C^\gamma$ and the definition of $\pi$, one can see that each element of $\pi(C^\gamma) \subset B^* \gamma^\ell \cdot B$ has a representative in the unipotent radical $U^*$ of $B^*$ which can be written in the following form:

\centers{$\displaystyle \prod_{\alpha \in \Phi(\gamma)} u_\alpha (x_\alpha) \qquad \text{with each} \ \ x_\alpha \in k$,}

\noindent the product being taken with respect to the order on $\Phi(\gamma)$. At the level of $D_\gamma$, some of the variables $x_\alpha$ must be non-zero (those corresponding to $\gamma^i(- \alpha_i)$ with $\gamma_i = 1$) but the expression becomes unique, and it will be referred as the \textbf{canonical expression} in $U^*$ of an element of $D_\gamma$. \end{notations}

\section{On the closure of  Deodhar cells\label{proof}}

This section is devoted to the two questions raised in the introduction. Before recalling them, we make the statements more precise. For $w = s_1 \ldots s_\ell$ a reduced expression of an element $w$ of $W$, we have defined in the previous section a desingularization of the Schubert variety $X_w$. One can embed this variety into a product of flag varieties as follows: we define the morphism $\iota : BS \longrightarrow (G/B)^\ell$ by

\centers{$  \iota([p_1,p_2, \ldots,p_\ell]) =(p_1B,p_1p_2B, \ldots, p_1 p_2 \cdots p_\ell B).$ }

\noindent Note that $\pi$ is the last component of this morphism. Let $\gamma \in \Gamma$ be a subexpression of $w$. As a direct consequence of the construction of $C^\gamma$, one has 

\centers{$\iota(C^\gamma) \ \subset \ \displaystyle \prod_{i=1}^\ell B^*\gamma^i \cdot B. $}

\noindent Since $BS$ is projective, $\iota$ is a closed morphism, and hence it sends the closure of a cell $C^\gamma$ in $BS$ to the closure of $\iota(C^\gamma)$. Therefore, it is natural to consider a partial order on the set $\Gamma$ coming from to the Bruhat order on $W$ since it describes the closure relation for Schubert cells. For $\delta \in \Gamma$, we define

\centers{$ \delta \preceq \gamma \ \ \iff \ \ \gamma^i \leq \delta^i\ $ for all $\ i = 1, \dots, \ell.$}

\leftcenters{Then, by construction:}{ $\overline{C^\gamma} \ \subset \ \displaystyle \bigcup_{\delta \preceq \gamma} C^\delta$ \quad and \quad $\overline{D_\gamma} \ \subset \ \displaystyle \bigcup_{\delta \preceq \gamma} D_\delta$}

\noindent where $\overline{D_\gamma}$ denotes the closure of $D_\gamma$ in the Schubert cell $Bw\cdot B$.  Now with these notations, the questions raised in the introduction can be rewritten as:

\begin{qu}\ \label{q1}Is the closure of $D_\gamma$ a union of cells ? In other terms, does the partition $(D_\gamma)_{\gamma \in \Gamma}$ define a stratification of the variety $B w \cdot B$ ?
\end{qu}

\begin{qu}\ \label{q2}For a subexpression $\delta \preceq \gamma$, do we have $\overline{D_\gamma} \cap D_\delta \neq \emptyset$ ?
\end{qu}

It is possible to give a positive answer to both of these questions in some specific cases -$w$ a Coxeter element or $\gamma$ maximal. However, this is not the case in general, and the situation can be even worse, as shown in the following sections.

\bk\mk

\noindent \textbf{II.1 - Chevalley formula in type B$_n$}\mk

From now on, $G$ will be a quasi-simple group of type B$_n$, for example the orthogonal group $\mathrm{SO}_{2n+1}(k)$. The Weyl group $W=W_n$ and its underlying root system correspond to the following Dynkin diagram: \mk

\centers{\begin{pspicture}(6,0)

  \cnode(0,0.05){4pt}{A}
  \cnode(1,0.05){4pt}{B}
  \cnode(2,0.05){4pt}{C}
  \cnode(3,0.05){4pt}{D}
  \cnode(5,0.05){4pt}{E}
  \cnode(6,0.05){4pt}{F}

  \ncline[nodesep=0pt,doubleline=true]{A}{B}
  \ncline[nodesep=0pt,linecolor=white]{A}{B}\naput[npos=-0.2]{$t_1$} \naput[npos=1.2]{$t_2$}
  \ncline[nodesep=0pt]{B}{C}
  \ncline[nodesep=0pt]{C}{D}\naput[npos=-0.2]{$t_3$} \naput[npos=1.2]{$t_4$}
  \ncline[nodesep=0pt,linestyle=dotted]{D}{E}
  \ncline[nodesep=0pt]{E}{F}\naput[npos=-0.2]{$t_{n-1}$} \naput[npos=1.2]{$t_n$}

\end{pspicture}}

\sk

\noindent The set of generators will be denoted by $S = \{t_1,\ldots,t_n\}$ and the associated simple roots by $\{\beta_1,\ldots,\beta_n\}$. There are $n^2$ positive roots, and their expression in terms of the simple ones is given by \cite[Planche II]{Bou}:
\begin{itemize}

\item[$\bullet$] $\alpha_i + \alpha_{i+1} + \cdots + \alpha_j$ \ for $1\leq i \leq j \leq n$; 

\item[$\bullet$] $2\alpha_1 + \cdots + 2 \alpha_i + \alpha_{i+1} + \cdots + \alpha_j$ \ for $1\leq i < j \leq n$.

\end{itemize}

\sk

Recall that to each of these roots and their opposite correspond a one-parameter subgroup $u_\alpha : k \longrightarrow U_\alpha$. Since every element of a Deodhar cell can be written in terms of these subgroups (see notations \ref{not}), we need to  recall the fundamental tool we will be using for all the computations, that is, the Chevalley commutator formula (see \cite[Theorem 5.2.2]{Car}). One may, and we will, choose indeed the family $(u_\alpha)_{\alpha \in \Phi}$ such that if $\alpha, \beta \in \Phi$ are any linearly independent roots and $x,y \in k$ any scalars, one has:

\centers{$\big[u_\alpha(x) \, ; u_\beta(y)\big] \, = \, u_\alpha(x) u_\beta(y) u_\alpha(-x) u_\beta(-y) \, = \, \displaystyle \prod_{i,j >0} u_{i\beta + j \alpha}(C_{ij\beta\alpha} \, (-y)^i x^j)$}

\noindent where the product is taken over all pairs of positive integers $i,j$ for which $i\beta + j\alpha$ is a roots, in order of increasing $i+j$. For the simplicity of the proofs, we give here some explicit expressions of this formula in the specific cases we will encounter: 

\begin{for}\label{for}Let $x,y \in k$. For  $\alpha, \beta \in \Phi^-$ and $i = 2, \ldots,n-1$, we have \begin{itemize}
\item[\emph{(i)}] if $\alpha + \beta \notin \Phi$ \ then \
{$u_\alpha(x) u_\beta(y) u_\alpha(-x) \, = \, u_\beta(y)$ ;}

\item[\emph{(ii)}] if $\alpha = -\beta_i$ and $\beta = -\beta_{i+1} - \cdots - \beta_{n} $   \ then \ 
{$u_ \alpha(x) u_{\beta}(y) u_{\alpha}(-x) \, = \, u_{\alpha + \beta}(\pm xy) u_{\beta}(y) $ ;}

\item[\emph{(iii)}] if $\alpha = -2\beta_1 - \beta_2 - \cdots - \beta_{n-1}$ and $\beta = -\beta_2 - \cdots -\beta_n$ \ then 
{$u_ \alpha(x) u_{\beta}(y) u_{\alpha}(-x) \, = \, u_{\alpha + \beta}(\pm xy) u_{\beta}(y) $ ;}

\item[\emph{(iv)}] if $\alpha = -\beta_i-\cdots -\beta_{n-1}$ and $\beta = -\beta_{n} $   \ then \
{$u_ \alpha(x) u_{\beta}(y) u_{\alpha}(-x) \, = \, u_{\beta}(y) u_{\alpha + \beta}(\pm xy) $ ;}

\item[\emph{(v)}] if $\alpha = -\beta_1-\cdots -\beta_{n-1}$ and $\beta = -\beta_{n} $  \ then \
{$\big[u_ \alpha(x)\, ; u_{\beta}(y)\big]  \, = \,  u_{2\alpha + \beta}(\pm x^2 y) u_{\alpha + \beta}(\pm xy)  $.}

\end{itemize}
\end{for}

\begin{rmk} The values of the constants $C_{ij\beta\alpha}$ can be determined by  \cite[Section 4.3]{Car}. Note that the signs of these constants depend on a choice on some of the elements of the Chevalley basis of the Lie algebra of $G$ (namely, the extra-special pairs, see \cite[Section 4.2]{Car}). However, this will not be relevant in our computations and we will use the notation $\pm$. \end{rmk}

\bk\mk

\noindent \textbf{II.2 - Obstruction to the stratification} \mk

In this section we give a negative answer to question \ref{q1}. To do so, we consider an element $w$ of $W_n$ defined by the following reduced expression:

\centers{$ w=t_n t_{n-1} \cdots t_2 t_1t_2 \cdots t_{n-1} t_n t_{n-1} \cdots t_2 t_1 t_2 \cdots t_{n-1}$}

\noindent and we define $\gamma,\delta \in \Gamma$ to be the following two distinguished subexpressions of $w$:

\leftcenters{and}{$\begin{array}[b]{r@{\, \ = \, \ }l} 
 \gamma & (1,t_{n-1},t_{n-2}, \ldots,t_2,1,t_2, \ldots, t_{n-1}, 1, t_{n-1}, \ldots, t_2,1,t_2, \ldots, t_{n-1}) \\[4pt]
\delta & (1,t_{n-1},t_{n-2}, \ldots,t_2,t_1,1,1,\ldots \ldots \ldots \ldots \ldots \ldots \ldots ,1,t_1,t_2, \ldots,t_{n-1}).
\end{array}$}

\noindent The dimension of the cells associated to these subexpressions is given by theorem \ref{ddec}.(ii). One can easily check that $\dim D^\gamma = 2n$ and $\dim D^\delta = 3n-3$ although the two subexpressions are related by $\delta \preceq \gamma$. Therefore, for $n \geq 4$, the closure of $D^\gamma$ cannot contain the cell $D^\delta$ and in this situation, one can no longer give a positive answer to both of the questions. More precisely, we prove:

\begin{prop}\label{prop1}The closure of $D^\gamma$ in the double Schubert cell $Bw\cdot B \cap B^* \cdot B$ contains a subvariety of $D^\delta$ of dimension $n$. 
\end{prop}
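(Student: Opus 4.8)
Both subexpressions satisfy $\gamma^\ell = \delta^\ell = 1$, so by Theorem \ref{ddec}(iii) the cells $D^\gamma$ and $D^\delta$ both lie in $Bw\cdot B\cap B^*\cdot B$; moreover $\delta\preceq\gamma$, so the inclusion $\overline{D^\gamma}\subset\bigcup_{\delta'\preceq\gamma}D_{\delta'}$ at least permits $\overline{D^\gamma}\cap D^\delta\neq\emptyset$. The task is to produce this intersection explicitly and bound its dimension from below. The plan is to work inside the unipotent radical $U^*$, coordinatizing each cell by the canonical expression of Notations \ref{not}, and to realize points of $D^\delta$ as limits of curves lying in $D^\gamma$.

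First I would write the canonical form of a generic $g\in D^\gamma$ as the ordered product $\prod_{\alpha\in\Phi(\gamma)}u_\alpha(x_\alpha)$, with the four coordinates attached to the off-positions of $\gamma$ required nonzero and the remaining $2n-4$ free, realizing $D^\gamma\simeq k^{2n-4}\times(k^\times)^4$. Determining the sequence $\Phi(\gamma)$ amounts to applying the partial products $\gamma^i$ to the simple roots $\beta_i$ block by block along the reduced word; only the resulting list of negative roots and their order matter for what follows. I would record the analogous description of $D^\delta\simeq k^{n-1}\times(k^\times)^{2n-2}$ through $\Phi(\delta)$: this fixes the target shape, namely a product over the roots of $\Phi(\delta)$ with $2n-2$ coordinates forced nonzero. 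Comparing the two supports, the roots of $\Phi(\delta)$ carrying a double coefficient of $\beta_1$ are the ones that cannot be read off directly from the $D^\gamma$-form, and so must be created by commutation.

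Next I would choose an $n$-parameter family of curves $t\mapsto g_t(\mathbf{y})$ in $D^\gamma$, with $\mathbf{y}\in k^n$ and $t\in k^\times$ a degeneration parameter: fix all but $n$ of the free coordinates and set the relevant coordinates to scalar multiples of suitable powers of $t$, the weights being chosen so that $g_t(\mathbf{y})$ admits a finite limit as $t\to 0$. Computing this limit is the heart of the argument. Using the Chevalley identities of Formula \ref{for}, I would commute root factors past one another to bring $g_t(\mathbf{y})$ into canonical $U^*$-form for each $t$, and then let $t\to 0$. The essential phenomenon is that the commutator of the short-root factor $u_{-(\beta_1+\cdots+\beta_{n-1})}$ with $u_{-\beta_n}$ produces, through Formula \ref{for}(v), the long-root factor $u_{-(2\beta_1+\cdots+2\beta_{n-1}+\beta_n)}$ together with the short-root factor $u_{-(\beta_1+\cdots+\beta_n)}$; these are exactly the factors needed to fill out the support $\Phi(\delta)$, while the factors carrying positive $t$-powers vanish in the limit.

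Finally I would verify that $g_0(\mathbf{y})=\lim_{t\to 0}g_t(\mathbf{y})$ has canonical expression supported exactly on $\Phi(\delta)$ with the $2n-2$ prescribed coordinates nonzero, so that it lands in $D^\delta$ rather than in a strictly smaller cell, and that $\mathbf{y}\mapsto g_0(\mathbf{y})$ is injective, exhibiting an irreducible $n$-dimensional subvariety of $\overline{D^\gamma}\cap D^\delta$. I expect the main obstacle to be exactly this limit bookkeeping: one must calibrate the $t$-weights so that no factor blows up, check that the surviving factors assemble into an element whose support is neither too small (which would drop the limit into a smaller cell) nor too large, and confirm that the $n$ parameters stay independent after the commutations collapse several of the original coordinates. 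The sign ambiguities $\pm$ of Formula \ref{for} are irrelevant here, since the surviving parameters are free.
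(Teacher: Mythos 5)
Your degeneration construction is, in substance, the one the paper uses: the paper also coordinatizes $D_\gamma$ by its canonical $U^*$-expression (Notations \ref{not}), substitutes monomials in a parameter $t$ for the coordinates (there the choice is $(z_n,\,z_1t,\,z_2t^2,\ldots,z_{n-1}t^2,\,t^{-2},\,-z_1t,\,-z_2t^2,\ldots,-z_{n-1}t^2)$, with the limit taken at $t=\infty$ rather than $t=0$, an immaterial difference), collapses the product using Formula \ref{for}, and the decisive commutation is indeed Formula \ref{for}(v) applied to $u_{-(\beta_1+\cdots+\beta_{n-1})}$ and $u_{-\beta_n}$. So the first half of your plan coincides with part (iii) of the paper's proof. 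Note one mechanism you should expect: the coordinates carrying high powers of $t$ do not simply die in the limit; they must be chosen in opposite-sign pairs so that they cancel by conjugation (Formulas \ref{for}(i)--(iv)), leaving $t$-free factors, and only one factor (the one acquiring $t^{-1}$ from Formula \ref{for}(v)) actually vanishes.

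The genuine gap is in your final step. You assert that the two factors created by Formula \ref{for}(v), namely $u_{-(2\beta_1+\cdots+2\beta_{n-1}+\beta_n)}$ and $u_{-(\beta_1+\cdots+\beta_n)}$, are ``exactly the factors needed to fill out the support $\Phi(\delta)$'', and you plan to certify membership in $D_\delta$ by seeing that the limit is a canonical product over $\Phi(\delta)$. But neither of these roots belongs to $\Phi(\delta)$: the only root of $\Phi(\delta)$ with a doubled coefficient is $-(2\beta_1+\beta_2+\cdots+\beta_{n-1})$, and $-(\beta_i+\cdots+\beta_n)$ for $i\le n-2$ is absent as well. What the degeneration actually yields is a product supported on $\Psi=\{-(2\beta_1+\cdots+2\beta_{n-1}+\beta_n),\,-(\beta_2+\cdots+\beta_n),\ldots,-(\beta_{n-1}+\beta_n),\,-\beta_n\}$, which is \emph{not} contained in $\Phi(\delta)$, so membership in $D_\delta$ cannot be read off the limit at all. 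It requires a second, independent computation going the other way: specialize the canonical coordinates $(x_\beta)_{\beta\in\Phi(\delta)}$ of $D_\delta$ to the pattern $(y_1,y_2,\ldots,y_{n-1},y_n,-y_{n-1},\ldots,-y_2,0,\ldots,0)$ (nonzero values in the $2n-2$ mandatory slots, zeros in the free ones) and collapse that product by Chevalley identities, the opposite-sign pairs cancelling, to show that the elements of $D_\delta$ so obtained are precisely the $\Psi$-products arising as your limits. This is part (ii) of the paper's proof and is about half the total work; without it (or an equivalent rewriting of the $\Psi$-product into canonical $\Phi(\delta)$-form), your argument only shows that $\overline{D_\gamma}$ contains an $n$-dimensional family of points of $B^*\cdot B$, not that this family lies in $D_\delta$.
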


\begin{proof}  (i) Let $\Psi$ be the subset of the root system $\Phi$ defined by 

\centers{$ \Psi = \{-2\beta_1 - \cdots - 2 \beta_{n-1} - \beta_n \, ; -\beta_2 - \cdots - \beta_n\, ; -\beta_3 - \cdots - \beta_n \, ; \ldots ; -\beta_{n-1} - \beta_n \, ; -\beta_n \}. $}

\noindent The sum of two elements of this subset is never a root, so that all the corresponding one-parameter subgroups commute. Associated to this set of roots, we define 

\centers{$ V \, = \, \displaystyle \prod_{\beta \in \Psi} u_\beta(k^\times) \ \subset \ U^*$.}

\noindent By the previous remark and formula \ref{for}.(i), this product does not depend on any order on $\Psi$. In order to make the connection with the cells $D_\gamma$ and $D_\delta$, we define the corresponding variety in $G/B$ by 
 
 \centers{$\Omega \, = \, V \cdot B \ \subset \ B^* \cdot B$.}
 
 \noindent It is an affine variety of dimension $n$, isomorphic to $V$. We show now that it is contained in both $\overline{D_\gamma}$ and $D_\delta$, which will prove the assertion of the theorem.

\sk

\noindent (ii) Using \cite[Section V.4.1]{Bou}, one can easily determine the elements of the sequence $\Phi(\delta)$; their opposite are given by 

\centers{$-\Phi(\delta) \, = \, \big( \hskip -1.3mm \begin{array}[t]{l} \beta_n \, ; 2\beta_1 + \beta_2 + \cdots + \beta_{n-1}\, ; \beta_2\, ; \beta_ 3\, ; \ldots ; \beta_{n-2}\, ; \beta_{n-1} + \beta_n \, ; \beta_{n-2}\, ; \ldots ; \beta_3\, ; \beta_2\, ; \\[3pt] 2\beta_1 + \beta_2 + \cdots + \beta_{n-1} \, ;  \beta_1 + \cdots + \beta_{n-1}\, ; \beta_2 + \cdots + \beta_{n-1}\, ; \ldots  ; \beta_{n-1}\big). \end{array}$}

\noindent Recall from notations \ref{not} that the elements of $D_\delta$ are parametrized by variables $(x_\beta)_{\beta \in \Phi(\delta)}$ living in $k^\times$ (whose for which $\delta_i = 1$) or $k$. For this specific subexpression, one can check that the first $(2n-2)$-th roots correspond to variables in $k^\times$ whereas the last $(n-1)$-th correspond to variables in $k$. 
Therefore, for $\mathbf{y}=(y_1,\ldots,y_{n}) \in (k^\times)^{n}$, we can consider the element of $D_\delta$  associated to the following specialization:
\centers{$(x_\beta)_{\beta \in \Phi(\delta)} \, = \, (y_1,y_2, \ldots, y_{n-1},y_{n},-y_{n-1}, \ldots, -y_3,-y_2, 0, \ldots, 0). $}

\noindent The corresponding representative in $U^*$ is thus given by

\centers{$ u_{\mathbf{y}} \, = \, u_{\beta_n}^{*}(y_1) u_{2\beta_1 + \beta_2 + \cdots + \beta_{n-1}}^* (y_2) \underbrace{u_{\beta_2}^*(y_3) \cdots u_{\beta_{n-1} + \beta_n}^*(y_n) \cdots u_{\beta_2}^*(-y_3)}_{\begin{array}{c} v_{\mathbf{y}} \end{array}}u_{2\beta_1 + \beta_2 + \cdots + \beta_{n-1}}^* (-y_2)$}

\noindent where, with a view of making the computations readable, we have denoted by $u_\alpha^* = u_{-\alpha}$ the one-parameter subgroup corresponding to the root $-\alpha$. By successive applications of formula \ref{for}.(i) and \ref{for}.(ii), the expression of $v_{\mathbf{y}}$ simplifies into 

\centers{$ v_{\mathbf{y}} \, = \, u_{\beta_2 + \cdots + \beta_n}^*(\pm y_3 \cdots y_n) \cdots u_{\beta_{n-2} +\beta_{n-1} + \beta_{n}}^*(\pm y_{n-1} y_n)u_{\beta_{n-1}+\beta_n}^*(y_n).$}

\noindent Now, by formula \ref{for}.(i) and \ref{for}.(iii) we get

\centers{$\begin{array}{r@{\ \, = \ \, }l} u_{\mathbf{y}} &  u_{\beta_n}^*(y_1) u_{2\beta_1 +  \cdots +2\beta_{n-1}+\beta_n}^* (\pm y_2 \cdots y_n) \, v_{\mathbf{y}} \\[5pt] & u_{\beta_n}^*(y_1) u_{2\beta_1 +  \cdots + 2\beta_{n-1} +\beta_n}^* (\pm y_2 \cdots y_n)  u_{\beta_2 + \cdots + \beta_n}^*(\pm y_3 \cdots y_n) \cdots u_{\beta_{n-1}+\beta_n}^*(y_n). \end{array}$}

\noindent Since every element of $V$ can be written in this form, this proves that $D_\delta$ contains the n-dimensional variety $\Omega$.

\sk 

\noindent (iii) As in (ii), it is easy to compute the sequence of roots occurring in the canonical expression in $U^*$ of the elements of $D_\gamma$ (see notations \ref{not}). Its opposite is given by

\centers{$-\Phi(\gamma) \, = \, \big( \hskip -1.3mm \begin{array}[t]{l}   \beta_n \, ; \beta_1 + \cdots + \beta_{n-1}\, ; \beta_2 + \cdots + \beta_{n-1}\, ; \ldots  ; \beta_{n-1} \, ; \\[3pt] 
 \beta_n \, ; \beta_1 + \cdots + \beta_{n-1}\, ; \beta_2 + \cdots + \beta_{n-1}\, ; \ldots  ; \beta_{n-1}\big). \end{array}$}

\noindent For $\mathbf{z} = (z_1, \cdots,z_n,t) \in (k ^\times)^{n+1}$, let us consider the representative $u_{\mathbf{z}} \in U^*$ of the element of $D_\gamma$ corresponding to the following choice of variables:

\centers{$(x_\beta)_{\beta \in \Phi(\delta)} \, = \, (z_n,z_1t,z_2t^2,z_3 t^2, \ldots,z_{n-1}t^2, t^{-2},-z_1t,-z_2t^2,-z_3 t^2, \ldots,-z_{n-1}t^2). $}

\noindent Because all the variables are non-zero, there is no need to check which root should correspond to a variable in $k^\times$ or $k$. Besides, we can apply formula \ref{for}.(i) to change the order of some terms in $u_{\mathbf{z}}$ and to get

\centers{$u_\mathbf{z} \, = \, u_{\beta_n}^*(z_n) u_{\beta_1+ \cdots + \beta_{n-1}}^* (z_1 t) \underbrace{\cdots u_{\beta_{n-1}}^*(z_{n-1} t^2) u_{\beta_n}^*(t^{-2}) u_{\beta_{n-1}}^*(-z_{n-1} t^2) \cdots}_{\begin{array}{c}v_{\mathbf{z}} \end{array}} u_{\beta_1+ \cdots + \beta_{n-1}}^* (-z_1 t).$} 

\noindent Applying successively formula \ref{for}.(i) and \ref{for}.(iv) leads to the following expression for $v_\mathbf{z}$

\centers{$v_\mathbf{z} \, = \, u_{\beta_n}^*(t^{-2}) u_{\beta_2 + \cdots + \beta_n}^*(\pm z_2) u_{\beta_3 + \cdots + \beta_n}^*(\pm z_3) \cdots u_{\beta_{n-1} + \beta_n}^*(\pm z_{n-1}).$}

\noindent Then, by using formula \ref{for}.(i) and then \ref{for}.(v) we obtain

\centers{$\begin{array}{r@{\, \ = \ \,}l} u_\mathbf{z} & u_{\beta_n}^*(z_n) \, \big[u_{\beta_1 + \cdots + \beta_{n-1}}^*(z_1t)\, ; u_{\beta_n}^*(t^{-2})\big] \, v_\mathbf{z} \\[5pt]
& u_{\beta_n}^*(z_n) u_{2\beta_1 + \cdots + 2\beta_{n-1}+\beta_n}^*(\pm z_1^2) u_{\beta_1 + \cdots + \beta_n}^*(\pm z_1 t^{-1}) v_\mathbf{z}. \\[5pt]
 \end{array}$} 

\noindent Finally, in this expression it is possible to evaluate the limit at $t=\infty$

\centers{$\displaystyle \lim_{t \to \infty} u_\mathbf{z} \, = \, u_{\beta_n}^*(z_n) u_{2\beta_1 + \cdots + 2\beta_{n-1}+\beta_n}^*(\pm z_1^2) u_{\beta_2 + \cdots + \beta_n}^*(\pm z_2) \cdots u_{\beta_{n-1} + \beta_n}^*(\pm z_{n-1}).$}

\noindent Once again, we observe that every element of $V$ can be written in this form, which proves that $\Omega = V\cdot B$ is contained in $\overline{D_\gamma}$.\end{proof}

\begin{cor} For any positive integer $n$, there exist $w \in W$, a reduced expression of $w$,  and $\gamma,\delta \in \Gamma_1$ two subexpressions of $w$ such that:

\begin{itemize}

\item[$\bullet$] $D_\delta \nsubseteq \overline{D_\gamma} $;

\item[$\bullet$] $\dim \overline{D_\gamma} \cap D_\delta \, \geq \,  n $.

\end{itemize}

\noindent In particular, this gives a negative answer to question \ref{q1}.

\end{cor}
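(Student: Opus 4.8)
The plan is to derive the corollary directly from Proposition \ref{prop1}, by choosing the rank of the ambient group large enough and then reading off both bullet points from the dimension formula of Theorem \ref{ddec}.(ii). Given a target integer $n$, I would take $G$ to be quasi-simple of type $\mathrm{B}_N$ with $N = \max(n,4)$, and let $w$, $\gamma$, $\delta$ be exactly the reduced expression and the two distinguished subexpressions introduced in Section \ref{proof}.2 (for the rank $N$). Both $D_\gamma$ and $D_\delta$ are cells of the double Schubert cell $Bw\cdot B \cap B^*\cdot B = Bw\cdot B \cap B^* 1\cdot B$, so Theorem \ref{ddec}.(iii) forces $\gamma^\ell = \delta^\ell = 1$; hence $\gamma,\delta \in \Gamma_1$, as the statement requires.

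For the second bullet I would simply invoke Proposition \ref{prop1}: the variety $\Omega = V\cdot B$ it produces satisfies $\Omega \subset D_\delta$ and $\Omega \subset \overline{D_\gamma}$, so $\Omega \subset \overline{D_\gamma}\cap D_\delta$. Since $\Omega \simeq V \simeq (k^\times)^N$ has dimension $N$, this yields $\dim\big(\overline{D_\gamma}\cap D_\delta\big) \geq N \geq n$. For the first bullet I would compare dimensions: Theorem \ref{ddec}.(ii) gives $\dim D_\gamma = 2N$ and $\dim D_\delta = 3N-3$, and because $D_\gamma$ is irreducible (being isomorphic to a product of copies of $k$ and $k^\times$) its closure has the same dimension, $\dim \overline{D_\gamma} = 2N$. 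As $N \geq 4$ gives $3N - 3 > 2N$, a subvariety of dimension $3N-3$ cannot lie inside one of dimension $2N$, so $D_\delta \nsubseteq \overline{D_\gamma}$.

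To extract the negative answer to Question \ref{q1} I would argue by contradiction. If the partition $(D_\gamma)_{\gamma\in\Gamma}$ were a stratification, then $\overline{D_\gamma}$ would be a union of cells, so $D_\delta$ would be either disjoint from $\overline{D_\gamma}$ or entirely contained in it. The nonempty intersection $\Omega \subset \overline{D_\gamma}\cap D_\delta$ excludes the first possibility, and the strict dimension gap $\dim D_\delta > \dim \overline{D_\gamma}$ excludes the second; this contradiction shows $\overline{D_\gamma}$ is not a union of cells.

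I do not expect a genuine obstacle here, since all the hard geometry is already packaged in Proposition \ref{prop1}. The only points needing care are choosing $N$ large enough to serve both purposes at once — reaching the target dimension ($N \geq n$) and guaranteeing the strict inequality $3N-3 > 2N$ ($N \geq 4$) — and making explicit why a strict dimension gap between $D_\delta$ and $\overline{D_\gamma}$ is incompatible with the closure of a cell being a union of cells.
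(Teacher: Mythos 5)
Your proposal is correct and is essentially the argument the paper intends: the corollary is an immediate consequence of Proposition \ref{prop1} together with the dimension count $\dim D_\gamma = 2n$, $\dim D_\delta = 3n-3$ stated just before it, exactly as you read it off (your choice $N=\max(n,4)$ just makes explicit the rank needed so that the dimension gap $3N-3>2N$ and the target intersection dimension $N\geq n$ hold simultaneously). The verification that $\gamma,\delta\in\Gamma_1$ and the contradiction argument against a stratification are the same routine steps the paper leaves implicit.
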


\bk\mk

\noindent \textbf{II.3 - Disjointness of cells} \mk

We move now attention to the problem raised in question \ref{q2}. We assume that $n=3$ and we consider the following two distinguished subexpressions of $w_0$ associated to the reduced expression $w_0 = t_3 t_2 t_1 t_2 t_3 t_2 t_1 t_2 t_1$


\leftcenters{and}{$ \begin{array}[b]{r@{\, \ = \ \, }l} 
\sigma & (1, t_2, 1 ,t_2,1,t_2,t_1,1,t_1)\\[4pt]
\tau &      (1,t_2,t_1,1,1,t_2,1,t_2,t_1).
\end{array}$}

\noindent We have $\tau \preceq \sigma$, and  the corresponding cells are subvarieties of $B^* t_2 \cdot B$ of dimension 6. 

\begin{prop} The closure $\overline D_\sigma$ of $D_\sigma$ in the Schubert cell $Bw_0\cdot B$ is disjoint from the cell $D_\tau$, giving hence a negative answer to question \ref{q2}.
\end{prop}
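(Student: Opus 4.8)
The plan is to exhibit a single regular function on the opposite cell $B^*t_2\cdot B$ that vanishes identically on $D_\sigma$ but is nowhere zero on $D_\tau$. Both cells lie in the double Schubert cell $Bw_0\cdot B\cap B^*t_2\cdot B$ (since $\sigma^9=\tau^9=t_2$), which is locally closed in $Bw_0\cdot B$; for a subset of a locally closed subvariety, the closure taken in the ambient $Bw_0\cdot B$ meets the subvariety exactly in the closure taken inside it. Hence such a function, being a closed condition, will vanish on $\overline{D_\sigma}\cap B^*t_2\cdot B$, which contains $\overline{D_\sigma}\cap D_\tau$, forcing the latter to be empty. The function I would use is the coordinate along the negative short simple root $-\beta_1$ in the canonical expression of notations~\ref{not}.

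First I would compute the two sequences $\Phi(\sigma)$ and $\Phi(\tau)$ explicitly, using the realization $\beta_1=e_3$, $\beta_2=e_2-e_3$, $\beta_3=e_1-e_2$ of the simple roots of $\mathrm{B}_3$. Computing the partial subwords $\sigma^i$ and $\tau^i$ as signed permutations of $(e_1,e_2,e_3)$ gives, for the opposite sequences, $-\Phi(\sigma)=(\beta_3,\ \beta_1+\beta_2,\ \beta_2,\ \beta_3,\ 2\beta_1+\beta_2,\ \beta_1+\beta_2)$ and $-\Phi(\tau)=(\beta_3,\ 2\beta_1+\beta_2,\ \beta_2+\beta_3,\ \beta_1,\ 2\beta_1+\beta_2,\ \beta_1+\beta_2)$. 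As in notations~\ref{not}, the coordinate attached to a letter lies in $k^\times$ when the corresponding $\sigma_i$ (resp.\ $\tau_i$) equals $1$ and in $k$ otherwise; for $\tau$ the first four letters carry coordinates in $k^\times$. The decisive observation is that $\beta_1$ occurs in $\Phi(\tau)$—as the fourth letter, hence with a coordinate in $k^\times$—but does not occur anywhere in $\Phi(\sigma)$.

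Next I would show that the $U_{-\beta_1}$–coordinate is identically zero on $D_\sigma$. Writing an element of $D_\sigma$ in canonical form and reducing it to a normal ordered product over $\Phi^-$ by the Chevalley relations (formula~\ref{for}), the only root groups that can appear are those indexed by negative roots in the additive closure of the roots of $\Phi(\sigma)$. In the $e_i$–coordinates these roots are $e_2-e_1,\ -e_2,\ e_3-e_2,\ -e_2-e_3$; the only one involving $e_1$ is $e_2-e_1$, while the remaining three all have $e_2$–coefficient $-1$, so no sum of them equals $-e_3=-\beta_1$. Thus every element of $D_\sigma$ lies in the unipotent subgroup $\prod_\alpha U_\alpha$ attached to this closed set of negative roots, on which the $-\beta_1$–coordinate is zero; being a closed condition, this persists on the closure. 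Conversely, $-\beta_1$ is the negative of a simple root, hence not a non-trivial sum of two negative roots, so no commutator in formula~\ref{for} ever contributes to its coordinate; for an element of $D_\tau$ the $-\beta_1$–coordinate therefore equals, up to sign, its fourth canonical parameter, which ranges over $k^\times$ and is in particular nonzero. Comparing the two facts gives $\overline{D_\sigma}\cap D_\tau=\emptyset$, the desired negative answer to question~\ref{q2}.

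The main obstacle is checking that this single coordinate is truly inert under the Chevalley relations on each cell: on $D_\sigma$ one must be certain that no iterated bracket of the roots of $\Phi(\sigma)$ ever produces $-\beta_1$—this is where the full bookkeeping of formula~\ref{for}, and in particular the quadratic term~\ref{for}.(v) landing on the highest root $-2\beta_1-2\beta_2-\beta_3$, must be kept under control—and on $D_\tau$ that the fourth parameter indeed survives the reordering into normal form untouched. The remaining, purely topological, point is the identity $\overline{D_\sigma}\cap B^*t_2\cdot B=\overline{D_\sigma}^{\,B^*t_2\cdot B}$ that lets the vanishing be transported to the closure.
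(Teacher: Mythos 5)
Your proof is correct and is essentially the paper's own argument: the regular function you use (the $-\beta_1$-coordinate on $B^*t_2\cdot B$) is exactly the equation of the closed codimension-one subvariety $(B^*\cap {}^{t_1}B^*)\,t_2\cdot B$ in which the paper traps $D_\sigma$ and its closure, and both arguments rest on the same two combinatorial facts, namely that $-\beta_1$ does not occur in $\Phi(\sigma)$ and, being the negative of a simple root, cannot be created by Chevalley commutators, while it occurs in $\Phi(\tau)$ exactly once with a coordinate in $k^\times$. The only cosmetic difference is that you verify the vanishing on $D_\sigma$ by an explicit additive-closure computation, where the paper gets it directly from the containment $\Phi(\sigma)\subset\Phi^-\setminus\{-\beta_1\}$.
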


\begin{proof} Using \cite[Section V.4.1]{Bou}, one can compute the one-parameter subgroups occurring in the canonical expression in $U^*$ of the elements of  $D_\sigma$ and $D_\tau$ (see notations \ref{not}). They are associated to the following sequences of roots:

\leftcenters{and}{$\begin{array}[b]{r@{\, \ = \ \,}l}
-\Phi(\sigma) & \big(  \beta_3 \, ; \beta_1+\beta_2 \, ; \beta_2\, ; \beta_3 \, ; 2 \beta_1 + \beta_2 \, ; \beta_1 + \beta_2  \big)  \\[4pt]
-\Phi(\tau) & \big(  \beta_3 \, ; 2\beta_1+\beta_2 \, ; \beta_2 + \beta_3\, ; \beta_1 \, ; 2 \beta_1 + \beta_2 \, ; \beta_1 + \beta_2 \big).  \end{array}$}

\noindent By definition, both of the cells $D_\sigma$ and $D_\tau$ are contained in $B^* t_2 \cdot B$, but since the simple negative root $-\beta_{1}$ does not occur in $\Phi(\sigma)$, the cell $D_\sigma$ is actually contained in $(B^* \cap {}^{t_{1}}B^*) t_2 \cdot B$, which is a closed subvariety of codimension 1 in $B^* t_2 \cdot B$. Therefore, the closure of $D_\sigma$ in the double Schubert cell $Bw_0\cdot B \cap B^* t_2 \cdot B$ is also contained in $(B^* \cap {}^{t_{1}}B^*) t_2 \cdot B$.

\sk

On the other hand, $-\beta_{1}$ occurs only once in $\Phi(\tau)$ and corresponds to a variable in $k^\times$: more precisely, if $i = 7$ then \\[2.5pt]
\indent $\bullet \ \tau^i =  t_2 t_1t_2$ and $\tau_i = 1$ so that $i \notin I(\tau)$ corresponds to a variable in $k^\times$; \\[2.5pt]
\indent $\bullet \ \tau^i(-\alpha_i) = \tau^i(-\beta_1) = t_2 t_{1} t_2(-\beta_1) = - \beta_{1}$
\\[2,5pt]
so that the cell $D_\tau$ is disjoint from  $(B^* \cap {}^{t_{1}}B^*) t_2 \cdot B$ and then from the closure of $D_\sigma$.\end{proof}

\begin{rmk} This situation is not specific to the low-dimensional cells. One can actually extend this example to the type B$_n$ for any $n \geq 3$ by considering the concatenation of $\sigma$ and $\tau$  with the subexpression of $v = t_n \cdots t_2 t_1 t_2 \cdots t_n$ defined by

\centers{$ \eta = (1, 1, \ldots, 1, t_2,1,t_2,1,\ldots,1). $}

\noindent The Deodhar cells corresponding to the subexpressions $\tilde \sigma = \eta \cdot \sigma$ and $\tilde \tau = \eta \cdot \tau$ are now of dimension $2n+2$, and satisfy indeed the previous proposition.
\end{rmk}

\end{document}